\documentclass[a4paper]{amsart}

\usepackage{esint}

\usepackage{amssymb}
\usepackage{amsmath}
\usepackage{mathrsfs}

\usepackage{graphicx}

\usepackage[hidelinks]{hyperref}
\usepackage[nameinlink]{cleveref} 

\def\restrict#1{\raise-.5ex\hbox{\ensuremath|}_{#1}}
\newcommand{\E}{\mathbb{E}}

\newtheorem{theorem}{Theorem}[section]

\newtheorem{lemma}[theorem]{Lemma}

\theoremstyle{definition}

\theoremstyle{remark}

\usepackage[backrefs]{amsrefs}

\renewcommand{\P}{\mathbb{P}}

\newcommand{\set}[1]{\left\{#1\right\}}

\newcommand{\norm}[1]{\left\Vert#1\right\Vert}


\newcommand{\intersect}{\cap}

%

\title{Two Approximation Results for Divergence Free Measures}

\author{Jesse Goodman}
\address[J. Goodman]{Department of Statistics, University of Auckland, Private Bag 92019, Auckland 1142, New Zealand}
\email[J. Goodman]{jesse.goodman@auckland.ac.nz}
\thanks{} 

\author{Felipe Hernandez}
\address[F. Hernandez]{Department of Mathematics,
Building 380, Stanford, California 94305, USA}
\email[F. Hernandez]{felipehb@stanford.edu}

\author{Daniel Spector}
\address[D. Spector]{Department of Mathematics, National Taiwan Normal University, No. 88, Section 4, Tingzhou Road, Wenshan District, Taipei City, Taiwan 116, R.O.C.
}
\email[D. Spector]{spectda@protonmail.com}

\begin{document}

\renewcommand{\sectionautorefname}{Section} 

\begin{abstract} 
In this paper we prove two approximation results for divergence free measures.  
The first is a form of an assertion of J.\ Bourgain and H.\ Brezis concerning the approximation of solenoidal charges in the strict topology:  
Given $F \in M_b(\mathbb{R}^d;\mathbb{R}^d)$ such that $\operatorname*{div} F=0$ in the sense of distributions, there exist oriented $C^1$ loops $\Gamma_{i,l}$ with associated measures $\mu_{\Gamma_{i,l}}$ such that
\begin{align*}
F= \lim_{l \to \infty} \frac{\|F\|_{M_b(\mathbb{R}^d;\mathbb{R}^d)}}{n_l \cdot l} \sum_{i=1}^{n_l}  \mu_{\Gamma_{i,l}}
\end{align*}
weakly-star in the sense of measures and
\begin{align*}
\lim_{l \to \infty} \frac{1}{n_l \cdot l} \sum_{i=1}^{n_l}  \|\mu_{\Gamma_{i,l}}\|_{M_b(\mathbb{R}^d;\mathbb{R}^d)} = 1.
\end{align*}
The second, which is an almost immediate consequence of the first, is that smooth compactly supported functions are dense in 
\[
\left\{ F \in M_b(\mathbb{R}^d;\mathbb{R}^d): \operatorname*{div}F=0 \right\}
\]
with respect to the strict topology.
\end{abstract}

\maketitle

\section{Main Results and Discussion}
In this paper, we prove two results concerning the approximation of divergence free measures.  
We explain how these results relate to other recent developments involving the dimension of measures with differential constraints and estimates for elliptic systems.  

\subsection{Main Results}
To state our first result, we note that for a piecewise $C^1$ curve $\Gamma\subset\mathbb{R}^d$ parametrized by arc length via $\gamma\colon[0,l]\to\mathbb{R}^d$ with $|\dot\gamma(t)|=1$, the mapping  
\begin{align*}
C_0(\mathbb{R}^d;\mathbb{R}^d) \to \mathbb{R}, \qquad \Phi\mapsto \int_0^l  \Phi(\gamma(t))\cdot \dot\gamma(t) \, dt,
\end{align*}
is a bounded linear functional on $C_0(\mathbb{R}^d;\mathbb{R}^d)$.  
By the Riesz representation theorem we can identify $\Gamma$ with a finite Radon measure $\mu_\Gamma \in M_b(\mathbb{R}^d;\mathbb{R}^d)$ characterized by
\begin{align*}
\int_{\mathbb{R}^d} \Phi\cdot d\mu_\Gamma = \int_0^l  \Phi(\gamma(t))\cdot \dot\gamma(t) \, dt 
\end{align*}
for all $\Phi\in C_0(\mathbb{R}^d,\mathbb{R}^d)$.  
We also recall that the distributional divergence of $F\in M_b(\mathbb{R}^d;\mathbb{R}^d)$ is characterized by the formula
\begin{align*}
\langle \operatorname*{div} F,\varphi \rangle := -\int_{\mathbb{R}^d} \nabla \varphi \cdot dF
\end{align*}
for all $\varphi \in C^1_c(\mathbb{R}^d)$.  


\begin{theorem}\label{bbassertion}
Suppose $F \in M_b(\mathbb{R}^d;\mathbb{R}^d)$ is such that $\operatorname*{div} F=0$ in the sense of distributions.  
Then there exist oriented $C^1$ closed curves $\Gamma_{i,l}$ with associated measures $\mu_{\Gamma_{i,l}}$ such that
\begin{align*}
F= \lim_{l \to \infty} \frac{\|F\|_{M_b(\mathbb{R}^d;\mathbb{R}^d)}}{n_l \cdot l} \sum_{i=1}^{n_l}  \mu_{\Gamma_{i,l}}
\end{align*}
weakly-star in the sense of measures and
\begin{align*}
\lim_{l \to \infty} \frac{1}{n_l \cdot l} \sum_{i=1}^{n_l}  \|\mu_{\Gamma_{i,l}}\|_{M_b(\mathbb{R}^d;\mathbb{R}^d)} = 1.
\end{align*}
\end{theorem}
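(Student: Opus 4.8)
The plan is to invoke S.\ Smirnov's decomposition \cite{Smirnov} twice and then to carry out a cut-and-close construction, a quantization, and a smoothing; throughout, $\epsilon>0$ is an accuracy parameter to be sent to $0$ at the end. First I would reduce to a compactly supported field. Writing $F=\int R_\sigma\,d\nu(\sigma)$ as Smirnov's superposition of the Radon measures $R_\sigma=\dot\sigma\,\mathcal{H}^1\restrict{\Gamma_\sigma}$ associated to closed Lipschitz loops $\sigma$, with no cancellation of mass, we have $\|F\|_{M_b}=\int L(\sigma)\,d\nu(\sigma)$, so $\nu$-a.e.\ loop has finite length and bounded image. Dominated convergence gives an $R=R_\epsilon>0$ with $\int_{\{\Gamma_\sigma\not\subset\overline{B_R}\}}L(\sigma)\,d\nu(\sigma)<\epsilon$; I then set $F_\epsilon:=\int_{\{\Gamma_\sigma\subset\overline{B_R}\}}R_\sigma\,d\nu(\sigma)$. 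Since every $R_\sigma$ attached to a closed loop is divergence free, so is $F_\epsilon$; it is supported in $\overline{B_R}$, and $\|F-F_\epsilon\|_{M_b}<\epsilon$, $\|F_\epsilon\|_{M_b}\in(\|F\|_{M_b}-\epsilon,\|F\|_{M_b}]$.

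Next I would apply the fixed-length form of Smirnov's theorem recalled above to $F_\epsilon$: for each $l$ there is a measure $\mu_l$ on curves of length $l$ with $F_\epsilon=\int R\,d\mu_l$ and total mass $\|F_\epsilon\|_{M_b}/l$. Then $\int\|R\|_{M_b}\,d\mu_l=\|F_\epsilon\|_{M_b}$, so this decomposition is again cancellation-free and $|F_\epsilon|=\int\mathcal{H}^1\restrict{\Gamma}\,d\mu_l$ as positive measures; testing on the open set $\mathbb{R}^d\setminus\supp F_\epsilon\supseteq\mathbb{R}^d\setminus\overline{B_R}$ forces $\Gamma\subset\overline{B_R}$, hence $|\gamma(0)-\gamma(l)|\le 2R$, for $\mu_l$-a.e.\ curve $\gamma$. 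For $l\ge 2R+2$ I would close each such $\gamma$ by a path in $\overline{B_R}$ of length at most $2R$ joining $\gamma(l)$ back to $\gamma(0)$, obtaining a closed Lipschitz loop $\Psi_l(\gamma)$ of length in $[l,l+2R]\subset[l,2l]$ with $R_{\Psi_l(\gamma)}=R_\gamma+Q_\gamma$, $\|Q_\gamma\|_{M_b}\le 2R$. The push-forward $(\Psi_l)_\#\mu_l$ again has total mass $\|F_\epsilon\|_{M_b}/l$; moreover $\int R_{\Psi_l(\gamma)}\,d\mu_l(\gamma)-F_\epsilon=\int Q_\gamma\,d\mu_l(\gamma)$ has $M_b$-norm at most $2R\|F_\epsilon\|_{M_b}/l\to 0$, while $\int L(\Psi_l(\gamma))\,d\mu_l(\gamma)=\|F_\epsilon\|_{M_b}+\int\|Q_\gamma\|_{M_b}\,d\mu_l(\gamma)=\|F_\epsilon\|_{M_b}\bigl(1+O(1/l)\bigr)$.

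It then remains to quantize and smooth. The probability measure $\tfrac{l}{\|F_\epsilon\|_{M_b}}(\Psi_l)_\#\mu_l$ has barycenter $\tfrac{l}{\|F_\epsilon\|_{M_b}}\int R_{\Psi_l(\gamma)}\,d\mu_l(\gamma)$ in the separable dual $M_b(\mathbb{R}^d;\mathbb{R}^d)$, so --- by the strong law of large numbers applied to i.i.d.\ samples, or by direct quantization --- one can choose closed loops $\Gamma^\circ_{1,l},\dots,\Gamma^\circ_{n_l,l}$ and integers $n_l\to\infty$ with $\tfrac1{n_l}\sum_{i=1}^{n_l}R_{\Gamma^\circ_{i,l}}$ within weak-$*$ distance $1/l$ of that barycenter. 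As each $\Gamma^\circ_{i,l}$ is a single Lipschitz loop, I would mollify its periodic arclength parametrization at a scale small enough (for that loop) that the length changes by less than $1$, and then rescale by a factor near $1$, producing a $C^1$ closed curve $\Gamma_{i,l}$ whose length $L_{i,l}$ I take to be exactly $l$ --- so that $\tfrac1{n_l l}\sum_{i=1}^{n_l}|\Gamma_{i,l}|=1$ for every $l$ --- and with $\langle R_{\Gamma_{i,l}}-R_{\Gamma^\circ_{i,l}},\Phi\rangle$ as small as desired; since only $n_l$ curves occur, I can make the latter $<1/l$ simultaneously for the first $l$ elements of a fixed countable dense subset $\{\Phi_k\}$ of $C_0(\mathbb{R}^d;\mathbb{R}^d)$. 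Writing $\gamma_{i,l}$ for the arclength parametrization of $\Gamma_{i,l}$, the three estimates combine to give, for each fixed $\Phi\in C_0(\mathbb{R}^d;\mathbb{R}^d)$,
\[
\frac{\|F\|_{M_b}}{n_l\,l}\sum_{i=1}^{n_l}\bigl\langle R_{\gamma_{i,l}},\Phi\bigr\rangle\ \xrightarrow[l\to\infty]{}\ \frac{\|F\|_{M_b}}{\|F_\epsilon\|_{M_b}}\bigl\langle F_\epsilon,\Phi\bigr\rangle .
\]
Since $\bigl\|\tfrac{\|F\|_{M_b}}{\|F_\epsilon\|_{M_b}}F_\epsilon-F\bigr\|_{M_b}\le 2\epsilon$, a diagonal argument over a sequence $\epsilon_l\downarrow 0$ chosen so slowly that $R_{\epsilon_l}\le l/2-1$ and the quantization error at step $l$ stays below $\epsilon_l$ produces the asserted curves.

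The step I expect to be the main obstacle is the confinement observation in the second paragraph. For a general, non-compactly-supported $F$ the length-$l$ pieces of a long but bounded flow line can have endpoints a distance $\sim l$ apart, so the closing arcs would add an irreducible $\Theta(1)$ to $\tfrac1{n_l l}\sum_i|\Gamma_{i,l}|$ and the second assertion would fail; it is precisely the reduction to compact support --- which forces the use of the closed-loop rather than the fixed-length form of Smirnov's theorem, since only the former survives truncation with the divergence-free property intact --- that makes the closing arcs negligible and yields the exact value $1$. A secondary, purely technical point is that mollification does not preserve curve length, which is why the smoothing is performed only after the reduction to finitely many curves.
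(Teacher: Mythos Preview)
Your approach is essentially correct and genuinely different from the paper's. Both proofs quantize Smirnov's length-$l$ decomposition via the strong law of large numbers and then close and smooth the sampled curves; the divergence lies entirely in how one shows the closing arcs are negligible. You first reduce to a compactly supported $F_\epsilon$, then use the no-cancellation identity $|F_\epsilon|=\int\mathcal{H}^1\restrict{\Gamma}\,d\mu_l$ to confine $\mu_l$-a.e.\ length-$l$ curve to $\overline{B_R}$, so each closing arc has length $\le 2R=o(l)$. The paper never reduces to compact support; instead it exploits the further conclusion of Smirnov's Theorem~A that the beginning and end points of the curves are each distributed according to $F_{TV}/l$. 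Hence the fraction of sampled curves with $b(R_i)$ or $e(R_i)$ outside $B(0,\epsilon l/2)$ tends to $2\,F_{TV}\bigl(B(0,\epsilon l/2)^c\bigr)/\|F\|_{M_b}\to 0$ as $l\to\infty$, and the remaining curves close with straight segments of length $\le\epsilon l$. Your route calls on Smirnov twice but needs only the barest form of the length-$l$ statement; the paper's route calls on Smirnov once but uses the endpoint-distribution refinement, which is exactly the ingredient you flagged as the ``main obstacle'' and then bypassed.

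One caution about your first step: Smirnov's decomposition of a solenoidal charge into divergence-free pieces is \emph{not} in general a superposition of closed Lipschitz loops of finite length (indeed the paper recalls Smirnov's Example~2 precisely to make this point), but rather of \emph{elementary solenoids}, i.e.\ weak-$*$ limits of long normalized curve segments. Your argument does not actually need the pieces to be loops---what matters for the truncation $\{\supp R_\sigma\subset\overline{B_R}\}$ and the dominated-convergence step is only that $\nu$-a.e.\ piece is divergence free with compact support. So you should invoke the solenoid form of Smirnov's theorem and check (or cite) that elementary solenoids have compact support, rather than asserting a closed-loop decomposition. With that correction, the confinement observation, the quantization, the mollify-then-rescale to hit length exactly $l$, and the diagonalization over $\epsilon_l\downarrow 0$ all go through as you describe.
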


A $C^1$ closed curve $\Gamma$ naturally yields a divergence free Radon measure.
Indeed, for $\varphi \in C^1_c(\mathbb{R}^d)$, we compute
\begin{align*}
\langle \operatorname*{div} \mu_\Gamma,\varphi \rangle &= -\int_0^l  \nabla \varphi(\gamma(t))\cdot \dot\gamma(t) \, dt\\
&= -\int_0^l  \frac{d}{dt} \varphi(\gamma(t)) \, dt\\
&= -\varphi(\gamma(l))+\varphi(\gamma(0))\\
&=0.
\end{align*}
Therefore, \autoref{bbassertion} allows one to handle problems concerning the generic case of a divergence free Radon measure with finite mass, provided one can handle the simpler case of $C^1$ closed curves, modulo weak-star convergence.
This has a number of useful applications.
For example, we use \autoref{bbassertion} to prove the following result, which states that smooth compactly supported functions are dense within the space of all divergence free Radon measures.

%
%
%

\begin{theorem}\label{density}
Suppose $F \in M_b(\mathbb{R}^d;\mathbb{R}^d)$ is such that $\operatorname*{div} F=0$ in the sense of distributions.  
Then there exists a sequence of smooth, compactly supported divergence free functions $F_l$ such that 
\begin{align*}
F = \lim_{l \to \infty} F_l
\end{align*}
weakly-star as measures and
\begin{align*}
\lim_{l \to \infty} \|F_l\|_{L^1(\mathbb{R}^d;\mathbb{R}^d)} =  \|F\|_{M_b(\mathbb{R}^d;\mathbb{R}^d)}.
\end{align*}
\end{theorem}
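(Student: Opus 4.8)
The plan is to take the explicit curve approximations furnished by Theorem~\ref{bbassertion} and smooth them by convolution, exploiting the facts that mollification commutes with the divergence and does not increase the total mass. Set
\[
G_l\defeq\frac{\norm{F}_{M_b(\mathbb{R}^d;\mathbb{R}^d)}}{n_l\cdot l}\sum_{i=1}^{n_l}\dot\gamma_{i,l}\,\mathcal{H}^1\restrict{\Gamma_{i,l}},
\]
so that $G_l\to F$ weakly-star. Each summand is the measure $R_{\gamma_{i,l}}$ attached to a \emph{closed} $C^1$ curve, hence is divergence free: for $\phi\in C^\infty_c(\mathbb{R}^d)$ the chain rule gives $\langle R_{\gamma_{i,l}},\nabla\phi\rangle=\int_0^{L_{i,l}}\tfrac{d}{dt}\phi(\gamma_{i,l}(t))\,dt=\phi(\gamma_{i,l}(L_{i,l}))-\phi(\gamma_{i,l}(0))=0$. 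Thus $\operatorname{div}G_l=0$, and $G_l$ is compactly supported, being a finite sum of measures carried by compact curves. Moreover, the triangle inequality and the second assertion of Theorem~\ref{bbassertion} give
\[
\norm{G_l}_{M_b(\mathbb{R}^d;\mathbb{R}^d)}\le\frac{\norm{F}_{M_b(\mathbb{R}^d;\mathbb{R}^d)}}{n_l\cdot l}\sum_{i=1}^{n_l}\abs{\Gamma_{i,l}}\longrightarrow\norm{F}_{M_b(\mathbb{R}^d;\mathbb{R}^d)},
\]
while weak-star lower semicontinuity of the total variation yields $\norm{F}_{M_b(\mathbb{R}^d;\mathbb{R}^d)}\le\liminf_l\norm{G_l}_{M_b(\mathbb{R}^d;\mathbb{R}^d)}$; therefore $\norm{G_l}_{M_b(\mathbb{R}^d;\mathbb{R}^d)}\to\norm{F}_{M_b(\mathbb{R}^d;\mathbb{R}^d)}$.

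Next I would mollify. Fix a nonnegative symmetric $\rho\in C^\infty_c(\mathbb{R}^d)$ with $\int\rho=1$ and put $\rho_\epsilon(x)=\epsilon^{-d}\rho(x/\epsilon)$. For each $l$ and each $\epsilon>0$, the field $G_l*\rho_\epsilon$ belongs to $C^\infty_c(\mathbb{R}^d;\mathbb{R}^d)$, is divergence free since $\operatorname{div}(G_l*\rho_\epsilon)=(\operatorname{div}G_l)*\rho_\epsilon=0$, satisfies $\norm{G_l*\rho_\epsilon}_{L^1}\le\norm{G_l}_{M_b(\mathbb{R}^d;\mathbb{R}^d)}$, and converges to $G_l$ weakly-star as $\epsilon\downarrow0$, because $\int(G_l*\rho_\epsilon)\cdot\Phi\,dx=\langle G_l,\rho_\epsilon*\Phi\rangle$ with $\rho_\epsilon*\Phi\to\Phi$ uniformly for $\Phi\in C_0(\mathbb{R}^d;\mathbb{R}^d)$; combining this with lower semicontinuity of the norm once more gives $\norm{G_l*\rho_\epsilon}_{L^1}\to\norm{G_l}_{M_b(\mathbb{R}^d;\mathbb{R}^d)}$ as $\epsilon\downarrow0$. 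I also record the quantitative bound $\abs{\int(G_l*\rho_\epsilon)\cdot\Phi\,dx-\langle G_l,\Phi\rangle}\le\norm{G_l}_{M_b(\mathbb{R}^d;\mathbb{R}^d)}\,\norm{\rho_\epsilon*\Phi-\Phi}_\infty$.

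Finally I would diagonalize. Choose a countable family $\{\Phi_j\}_{j\ge1}$ dense in $C_0(\mathbb{R}^d;\mathbb{R}^d)$, and for each $l$ select $\epsilon_l>0$ so small that $F_l\defeq G_l*\rho_{\epsilon_l}$ satisfies $\abs{\norm{F_l}_{L^1}-\norm{G_l}_{M_b(\mathbb{R}^d;\mathbb{R}^d)}}<1/l$ and $\norm{G_l}_{M_b(\mathbb{R}^d;\mathbb{R}^d)}\norm{\rho_{\epsilon_l}*\Phi_j-\Phi_j}_\infty<1/l$ for every $j\le l$; both are satisfiable by the preceding paragraph, the latter because $\sup_l\norm{G_l}_{M_b(\mathbb{R}^d;\mathbb{R}^d)}<\infty$. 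Then $F_l\in C^\infty_c(\mathbb{R}^d;\mathbb{R}^d)$ with $\operatorname{div}F_l=0$; for each fixed $j$ one has $\int F_l\cdot\Phi_j\,dx-\langle G_l,\Phi_j\rangle\to0$ and $\langle G_l,\Phi_j\rangle\to\langle F,\Phi_j\rangle$, hence $\int F_l\cdot\Phi_j\,dx\to\langle F,\Phi_j\rangle$; density of $\{\Phi_j\}$ together with the uniform bound on $\norm{F_l}_{L^1}$ promotes this to all $\Phi\in C_0(\mathbb{R}^d;\mathbb{R}^d)$; and $\norm{F_l}_{L^1}\to\norm{F}_{M_b(\mathbb{R}^d;\mathbb{R}^d)}$ by the first paragraph.

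I do not anticipate a genuine obstacle: the analytic content is entirely absorbed into Theorem~\ref{bbassertion}, and the remaining work is the bookkeeping of selecting a single mollification scale $\epsilon_l$ that simultaneously controls the (metrizable on norm-bounded sets) weak-star topology against a dense family and pins $\norm{F_l}_{L^1}$ to $\norm{F}_{M_b(\mathbb{R}^d;\mathbb{R}^d)}$. The one point worth emphasizing is that one obtains convergence of the norms, rather than merely their boundedness, by pairing the length identity in the second assertion of Theorem~\ref{bbassertion} with lower semicontinuity of the total variation.
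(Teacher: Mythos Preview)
Your proposal is correct and follows essentially the same route as the paper: invoke Theorem~\ref{bbassertion} to obtain the compactly supported, divergence free loop approximations $G_l$, mollify, and combine the length identity with lower semicontinuity to upgrade weak-star convergence to strict convergence. The only cosmetic difference is that the paper ties the mollification scale directly to $l$ by setting $F_l=G_l*\rho_l$ and uses $\Phi*\rho_l\to\Phi$ uniformly together with the uniform bound on $\|G_l\|_{M_b}$, whereas you carry out the diagonalization more explicitly via a dense family $\{\Phi_j\}$.
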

\autoref{density} is one of a class of results stating that functions satisfying a differential constraint can be approximated by smooth compactly supported functions satisfying the same constraint, see e.g. \cite{GuerraRaita}*{Proposition 3.16 on p.~290} or \cite{BonamiPoornima}*{Lemma 1 on p.~177}.
A naive attempt to produce compact support -- multiplying $F$ by a cutoff function -- does not work, as it destroys the differential constraint $\operatorname*{div} F=0$.  
The arguments in \cites{GuerraRaita,BonamiPoornima} use the differential constraint to lift $F$ to another object; apply a cutoff argument to this lifted object; and then project back to $F$.
For example, when the differential constraint from \autoref{density} is instead $\operatorname*{curl}F=0$, A.\ Bonami and S.\ Poornima \cite{BonamiPoornima} lift $F$ to a potential $u \in \dot{W}^{1,1}(\mathbb{R}^d)$ such that $F=\nabla u$.
Even with the vast literature concerning the properties of gradients, the rest of the argument is non-trivial: Bonami and Poornima prove that $W^{1,1}(\mathbb{R}^d)$ is dense in $\dot{W}^{1,1}(\mathbb{R}^d)$, using the boundedness of certain singular integral operators on functions with constrained Fourier support.  
By contrast, \autoref{bbassertion} allows us to prove \autoref{density} using only standard mollification arguments.




 

We continue the introduction with a discussion of the connections with Smirnov's theorem, the dimension of singularities of measures, estimates for elliptic systems, and a further approximation which gives uniformity over the curves before providing proofs of \autoref{bbassertion} and \autoref{density} in Sections \ref{probability} and \ref{proofs}.

\subsection{Discussion}

\subsubsection{Smirnov's Theorem}\label{smirnov}
The basis of \autoref{bbassertion} is a result of S.\ Smirnov \cite{Smirnov}*{Theorem A on p.~847}, quoted here in part.
Write $\mathcal{C}_l$ for the space of rectifiable curves in $\mathbb{R}^d$ of length $l$.
Given $F \in M_b(\mathbb{R}^d;\mathbb{R}^d)$ such that $\operatorname*{div} F=0$ in the sense of distributions, for each $l>0$ there exists a measure $\mu$ on $\mathcal{C}_l$ such that
\begin{equation*}
    \langle F,\Phi\rangle = \int_{\mathcal{C}_l} \langle R,\Phi\rangle \;d\mu(R).
\end{equation*}
Moreover the measure $\mu$ satisfies $\|\mu\|_{M_b(\mathcal{C}_l)} = l^{-1} \|F\|_{M_b(\mathbb{R}^d;\mathbb{R}^d)}$ and 
\[
\frac{ |F|}{l} = \int_{\mathcal{C}_l} \delta_{b(R)} d\mu(R)
    = \int_{\mathcal{C}_l} \delta_{e(R)} d\mu(R)
    ,
\]
where $b(R)$ and $e(R)$ are the beginning and endpoints of the curve $R$; here the total variation measure $|F|$ is the non-negative measure on $\mathbb{R}^d$ defined by 
\begin{align*}
\langle |F|, \varphi \rangle = \sup_{\Phi \in C_c(\mathbb{R}^d; \mathbb{R}^d) , \|\Phi\|_{C_0(\mathbb{R}^d;\mathbb{R}^d)} \leq 1} \langle F ,  \varphi \Phi \rangle
.
\end{align*}

In contrast to Smirnov's theorem, the curves in \autoref{bbassertion} are closed, $C^1$, and need not have length $l$.  
The gain in smoothness is possible because \autoref{bbassertion} is not a decomposition but an approximation, while the change in length is a result of the process of closing the curves.  
This closing of Smirnov's curves yields curves whose lengths may in principle lie anywhere in the interval $[l,2l]$; however, the second convergence assertion of the theorem shows that these lengths are typically of length $l$ in the limit. 
That one approximates a given divergence free measure by closed curves is important for estimates, see \autoref{uniformity_over_curves} and in particular equation \eqref{closed_loop_estimate} below.

\subsubsection{Dimension of Singularities of Measures with Differential Constraints}
The question of the dimension of the space
\begin{align*}
\left\{ F \in M_b(\mathbb{R}^d;\mathbb{R}^k) : LF=0\right\}
\end{align*}
where $L$ is a homogeneous differential or pseudo-differential operator has a long and involved history.  
Here we recall that the Hausdorff dimension of a finite Radon measure is defined as
\begin{align*}
\operatorname*{dim_\mathcal{H}}F:= \sup_{\beta>0} \left\{ \beta : \mathcal{H}^\beta(E)=0 \implies |F|(E)=0\right\}
\end{align*}
where $|F|$ is the total variation measure associated to $F$ defined in the preceding section, while the dimension of a closed subspace $X \subset M_b(\mathbb{R}^d;\mathbb{R}^k)$ can be defined as
\begin{align*}
\kappa:= \inf_{F \in X} \operatorname*{dim_\mathcal{H}}F.
\end{align*}

Smirnov's result \cite{Smirnov}*{Theorem A on p.~847}, Roginskaya and Wojciechowski's \cite{RW}*{Corollary 4 on p.~220}, and our \autoref{bbassertion} are manifestations of the fact that
\begin{align*}
\left\{ F \in M_b(\mathbb{R}^d;\mathbb{R}^d) : \operatorname*{div}F=0\right\}
\end{align*}
has dimension $\kappa=1$.  Indeed, the decompositions provide the lower bound, while the fact that closed curves are divergence free measures gives the upper bound.  
By contrast, the space
\begin{align*}
\left\{ F \in M_b(\mathbb{R}^d;\mathbb{R}^d) : \operatorname*{curl}F=0\right\},
\end{align*}
has $\kappa=d-1$.  This can be seen from the identification
\begin{align*}
F = \nabla u \in \dot{BV}(\mathbb{R}^d),
\end{align*}
whereupon the $BV(\mathbb{R}^d)$ theory yields $\kappa = d-1$, see \cite{AmbFusPal2000}*{Lemma~3.76 on p.~170}.

Similar phenomena also apply for pseudo-differential constraints.  For example, a classical result of F.\ Riesz and M.\ Riesz states that a measure on the circle whose Fourier transform is supported on the positive integers is absolutely continuous with respect to the Lebesgue measure, see e.g. \cite{Havin}*{p.~13}.  Note that $\operatorname*{supp}\widehat{\mu} \subset \mathbb{Z}^+$ is equivalent to $[|n|-n]\widehat{\mu}(n)= 0$, which can be expressed as $L\mu=0$ for $L= (-\Delta)^{1/2} -i \frac{d}{dx}$.  Thus, their result implies that the pseudo-differentially constrained space
\begin{align*}
\left\{ \mu \in M_b(S^1;\mathbb{C}) : L\mu= 0 \right\}
\end{align*}
has dimension $\kappa=1\;(=d)$.

For further results on differential constraints and dimension, we refer the reader to \cites{AW,ARDPHR,AAR,Raita_report,DS,RA, Stol-Woj}.



\subsubsection{From Curves to Divergence Free Measures:  Estimates for Integrals Operators}

The following result was established by the second and third named authors in \cite{HS}.  
\begin{theorem}[Theorem 1.1 in \cite{HS}]\label{mainresult}
Let $d\geq 2$ and $\alpha \in (0,d)$.  There exists a constant $C=C(\alpha,d)>0$ such that
\begin{align}\label{potentialnodiracl1'}
\|I_\alpha F \|_{L^{d/(d-\alpha),1}(\mathbb{R}^d;\mathbb{R}^d)} \leq C \|F\|_{L^1(\mathbb{R}^d;\mathbb{R}^d)}
\end{align}
for all fields $F \in L^1(\mathbb{R}^d;\mathbb{R}^d)$ such that $\operatorname*{div} F=0$ in the sense of distributions.
\end{theorem}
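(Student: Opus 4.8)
The plan is to deduce Theorem~\ref{mainresult} from the first approximation result stated in the abstract (the solenoidal-charge approximation), reducing the estimate for an arbitrary divergence-free $F \in L^1(\mathbb{R}^d;\mathbb{R}^d)$, viewed inside $M_b(\mathbb{R}^d;\mathbb{R}^d)$, to a single-loop estimate and then passing to the limit. First I would record the soft facts that make this legitimate. Since $p \defeq d/(d-\alpha)$ satisfies $1 < p < \infty$ (here $p' = d/\alpha$), the space $L^{p,1}(\mathbb{R}^d;\mathbb{R}^d)$ carries a genuine norm equivalent to its quasi-norm and is the dual of $L^{p',\infty}$; in particular its norm obeys the triangle inequality up to a fixed constant and is weak-$\ast$ lower semicontinuous. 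I would also note that if $F_l \to F$ weakly-$\ast$ in $M_b$ with $\norm{F_l}_{M_b} \to \norm{F}_{M_b}$, then $I_\alpha F_l \to I_\alpha F$ in $\mathcal{D}'(\mathbb{R}^d)$: indeed $\langle I_\alpha F_l, \varphi\rangle = \langle F_l, I_\alpha\varphi\rangle$ for $\varphi \in C_c^\infty$, and $I_\alpha\varphi \in C_0(\mathbb{R}^d)$ because $\alpha < d$ forces decay at infinity, so strict convergence of the $F_l$ passes through the pairing.

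The crux is the following single-loop estimate: there is $C = C(\alpha,d)$ such that for every $C^1$ closed curve $\Gamma$ with arclength parameterization $\gamma \in C^1([0,L];\mathbb{R}^d)$,
\[
\norm{I_\alpha\!\left(\dot\gamma\restrictlong{\mathcal{H}^1}{\Gamma}\right)}_{L^{d/(d-\alpha),1}(\mathbb{R}^d;\mathbb{R}^d)} \le C L.
\]
Normalizing so that $\Gamma$ is centered at the origin, hence $\operatorname{diam}\Gamma \le L/2$, I would start from the closed-loop representation
\[
I_\alpha\!\left(\dot\gamma\restrictlong{\mathcal{H}^1}{\Gamma}\right)(x) = c_{\alpha,d}\int_0^L \abs{x-\gamma(t)}^{\alpha-d}\dot\gamma(t)\,dt = c_{\alpha,d}\oint_\Gamma \abs{x-y}^{\alpha-d}\,dy
\]
and extract two pointwise bounds. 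Writing $r = \operatorname{dist}(x,\Gamma)$, the contribution of the arc nearest $x$ behaves like $\int (r^2+s^2)^{(\alpha-d)/2}\,ds$ in the arclength variable $s$, yielding $\abs{I_\alpha(\dot\gamma\restrictlong{\mathcal{H}^1}{\Gamma})(x)} \lesssim r^{\alpha-d+1}$ when $\alpha < d-1$, with a $\log(\operatorname{diam}\Gamma/r)$ (at $\alpha = d-1$) or a bounded (when $\alpha > d-1$) replacement; for $\abs x \ge 2\operatorname{diam}\Gamma$ the cancellation $\oint_\Gamma dy = 0$ upgrades the naive bound $\abs x^{\alpha-d}$ to $\abs{I_\alpha(\dot\gamma\restrictlong{\mathcal{H}^1}{\Gamma})(x)} \lesssim L\operatorname{diam}(\Gamma)\abs x^{\alpha-d-1}$. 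Feeding these into $\norm{g}_{L^{p,1}} \asymp \int_0^\infty \abs{\set{\abs g > \lambda}}^{1/p}\,d\lambda$, together with the tube estimate $\abs{\set{\operatorname{dist}(x,\Gamma) < r}} \lesssim L r^{d-1} + r^d$, the resulting one-dimensional integrals over the near and far regions converge exactly because $0 < \alpha < d$, and each evaluates to $\lesssim L$ after inserting $\operatorname{diam}\Gamma \le L/2$.

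With the loop estimate in hand the theorem follows quickly. Apply the first approximation result to a divergence-free $F \in L^1$ to obtain closed curves $\Gamma_{i,l}$ with $l \le L_{i,l} \le 2l$ and
\[
F_l \defeq \frac{\norm{F}_{M_b}}{n_l\, l}\sum_{i=1}^{n_l}\dot\gamma_{i,l}\restrictlong{\mathcal{H}^1}{\Gamma_{i,l}} \longrightarrow F
\]
weakly-$\ast$, with $\norm{F_l}_{M_b}\to\norm{F}_{M_b}$ (using $\frac{1}{n_l l}\sum_i\abs{\Gamma_{i,l}}\to 1$ together with lower semicontinuity of the total variation). By the triangle inequality in $L^{p,1}$ and the single-loop estimate,
\[
\norm{I_\alpha F_l}_{L^{d/(d-\alpha),1}} \lesssim \frac{\norm{F}_{M_b}}{n_l\, l}\sum_{i=1}^{n_l} L_{i,l} \le \frac{\norm{F}_{M_b}}{n_l\, l}\, n_l\,(2l) = 2\norm{F}_{M_b} = 2\norm{F}_{L^1}
\]
uniformly in $l$; since $I_\alpha F_l \to I_\alpha F$ in $\mathcal{D}'$ and $\norm{\cdot}_{L^{p,1}}$ is weak-$\ast$ lower semicontinuous, we conclude $\norm{I_\alpha F}_{L^{d/(d-\alpha),1}} \lesssim \norm{F}_{L^1}$.

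I expect the single-loop estimate to be the main obstacle: obtaining the sharp pointwise decay uniformly over the whole range $\alpha \in (0,d)$ — in particular handling the borderline case $\alpha = d-1$ and genuinely exploiting the loop closure $\oint_\Gamma dy = 0$ in the far field, which is essential since without it the scalar potential $c_{\alpha,d}\abs{x}^{\alpha-d}$ already fails to lie in $L^{d/(d-\alpha),1}$ — and then verifying that the layer-cake integral, organized over dyadic distance scales to $\Gamma$, converges with the stated linear dependence on $L$ and no dependence on the finer geometry of $\Gamma$.
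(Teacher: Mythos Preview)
Your overall reduction---apply Theorem~\ref{bbassertion} to approximate $F$ by convex combinations of loops, prove a single-loop estimate with constant independent of the loop, then pass to the limit via lower semicontinuity of the $L^{d/(d-\alpha),1}$ norm---matches the outline the paper gives (the complete argument is carried out in \cite{HS}). The gap is in your proposed proof of the single-loop estimate.

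The pointwise bound $\abs{I_\alpha(\dot\gamma\,\mathcal{H}^1\!\restrict{\Gamma})(x)} \lesssim r^{\alpha-d+1}$ with $r=\operatorname{dist}(x,\Gamma)$ is not uniform over $C^1$ closed curves. Your derivation (``the contribution of the arc nearest $x$ behaves like $\int (r^2+s^2)^{(\alpha-d)/2}\,ds$'') treats only a single nearby arc; it is precisely the ball growth condition~\eqref{ballgrowth} that guarantees there is essentially one such arc at each scale. For a curve that winds $N$ times around (a small $C^1$ perturbation of) a fixed unit circle, the near-field potential at distance $r\ll 1$ is of order $N\,r^{\alpha-d+1}$, not $r^{\alpha-d+1}$, and there is no cancellation from $\dot\gamma$ to exploit at a point just outside the circle. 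The only uniform pointwise near-field bound available is the trivial $\abs{I_\alpha(\dot\gamma\,\mathcal{H}^1\!\restrict{\Gamma})(x)} \lesssim L\,r^{\alpha-d}$, and feeding this into your layer-cake computation with the tube estimate yields a divergent integral, so the argument does not close.

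This is exactly the difficulty the paper isolates: inequality~\eqref{loops} is established in \cite{HS} with a constant depending on the ball-growth constant of $\Gamma$, and the Surgery Lemma (Lemma~\ref{surgery-lem}) is then used to write an arbitrary $C^1$ closed loop as a sum of piecewise $C^1$ closed loops with a \emph{universal} ball-growth constant, while increasing the total length by at most a fixed factor. That decomposition step is the missing ingredient in your plan; your far-field analysis via the loop cancellation $\oint_\Gamma dy=0$ is fine, but the near-field requires control on how often $\Gamma$ revisits a neighbourhood, and that control is supplied by surgery rather than by a direct pointwise bound.
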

\noindent
Here we use $I_\alpha$ to denote the Riesz potential of order $\alpha \in (0,d)$ (for a precise definition see \cite{Stein}*{p.~117} or \cite{HS}).

The first step in the proof, inspired by \cites{KrantzPelosoSpector,Spector1,Spector2} and a suggestion of Haim Brezis, is to use \autoref{bbassertion} to write $F$ as a weak-star limit of convex combinations of closed rectifiable curves.
This approach is based on H. Brezis and J. Bourgain's assertion \cite{BourgainBrezis2004}*{p.~541} and \cite{BourgainBrezis2007}*{p.~278} that
\begin{align}\label{BB_F_formula}
F= \lim_{l \to \infty}  \sum_{i=1}^{n_l}  \alpha_{i,l} \frac{\mu_{\Gamma_{i,l}}}{\|\mu_{\Gamma_{i,l}}\|_{M_b(\mathbb{R}^d;\mathbb{R}^d)} },
\end{align}
for some choice of closed rectifiable curves $\Gamma_{i,l}$ and scalars $\alpha_{i,l}\geq 0$ which satisfy $\sum_{i=1}^{n_l} \alpha_{i,l} \leq \|F\|_{M_b(\mathbb{R}^d;\mathbb{R}^d)}$.
If we define
\begin{align*}
\alpha_{i,l}:= \frac{\|F\|_{M_b(\mathbb{R}^d;\mathbb{R}^d)} \|\mu_{\Gamma_{i,l}}\|_{M_b(\mathbb{R}^d;\mathbb{R}^d)}}{ \sum_{i=1}^{n_l}  \|\mu_{\Gamma_{i,l}}\|_{M_b(\mathbb{R}^d;\mathbb{R}^d)}},
\end{align*}
then our \autoref{bbassertion} implies \eqref{BB_F_formula} and thus verifies Bourgain and Brezis's assertion \cite{BourgainBrezis2004}*{p.~541} and \cite{BourgainBrezis2007}*{p.~278}, with additional smoothness in the curves.




\subsubsection{Uniformity over Curves}\label{uniformity_over_curves}


\autoref{bbassertion} converts \autoref{mainresult} to the estimate restricted to curves, the inequality
\begin{align}\label{loops}
\|I_\alpha \mu_\Gamma\|_{L^{d/(d-\alpha),1}(\mathbb{R}^d;\mathbb{R}^d)}\leq C'\|\mu_{\Gamma}\|_{M_b(\mathbb{R}^d;\mathbb{R}^d)}
\end{align}
for any smooth, closed curve $\Gamma$.  
That is, one must estimate the fractional integral of a curve $\Gamma$ in a Lorentz space in terms of its length, which be rescaling can be assumed to be one.

Because $\mu_\Gamma$ is an oriented closed loop, there is a minimal surface that spans $\mu_\Gamma$.
An argument based on maximal functions leads to the useful inequality 
\begin{align}\label{closed_loop_estimate}
\|I_\alpha \mu_\Gamma\|_{L^{1,\infty}(\mathbb{R}^d;\mathbb{R}^d)}\leq C\left(\|\mu_{\Gamma}\|_{M_b(\mathbb{R}^d;\mathbb{R}^d)}+\|\mu_{\Gamma}\|^2_{M_b(\mathbb{R}^d;\mathbb{R}^d)}\right),
\end{align}
see \cite{HS}*{Lemma 4.1 and its consequences}.
In order to obtain \eqref{loops},  a second estimate is needed, and the relevant quantity (see \cite{HernandezRaitaSpector2022}*{equation (1.5)} or \cite{HS}*{equation (1.18)}) turns out to be the norm on the Morrey space $\mathcal{M}^1(\mathbb{R}^d)$,
\begin{align*}
\|\mu\|_{\mathcal{M}^1(\mathbb{R}^d)}:= \sup_{r>0,x\in \mathbb{R}^d} \frac{|\mu|(B(x,r))}{r}
 \end{align*}
 for locally finite Radon measures $\mu$.  
The curves provided by \autoref{bbassertion} need not admit a uniform bound on their Morrey norms.
However, because they are curves they lend themselves to further geometric manipulation.  This was the basis for the Surgery Lemma \cite{HS}*{Lemma~5.1}:
\begin{lemma}
    \label{surgery-lem}
Suppose $\Gamma$ is an oriented $C^1$ closed  curve.  There exist oriented piecewise $C^1$ closed curves $\{\Gamma_j\}_{j=1}^{N(\Gamma)}$ with associated measures $\{\mu_{\Gamma_j}\}_{j=1}^{N(\Gamma)}$ such that
    \begin{enumerate}
\item \begin{align*}
\mu_\Gamma= \sum_{j=1}^{N(\Gamma)} \mu_{\Gamma_j};
\end{align*}
    \item The total length of the curves obtained in the decomposition satisfies
\begin{align*}
\sum_{j=1}^N \|\mu_{\Gamma_j}\|_{M_b(\mathbb{R}^d;\mathbb{R}^d)} \leq 10 \|\mu_{\Gamma} \|_{M_b(\mathbb{R}^d;\mathbb{R}^d)};
\end{align*}
    \item
    Each $\mu_{\Gamma_j}$ satisfies the ball growth condition
  \begin{align*}
 \|\mu_{\Gamma_j}\|_{\mathcal{M}^1(\mathbb{R}^d)} = \sup_{x\in \mathbb{R}^d,r>0} \frac{|\mu_{\Gamma_j}|(B(x,r))}{r} \leq 1000.
\end{align*}
    \end{enumerate}
\end{lemma}
\noindent
The combination of \Cref{surgery-lem} and \autoref{bbassertion} shows that any divergence free measure can be approximated by sequences of sums of oriented closed loops with a uniform bound in the Morrey space $\mathcal{M}^1(\mathbb{R}^d)$, \cite{HS}*{Theorem 1.5}.  This allows one to deduce \eqref{loops} and in turn \autoref{mainresult}.

\section{Approximating general integrals by sums}\label{probability}

Smirnov's decomposition \cite{Smirnov}*{Theorem~A} represents a divergence free function in terms of an integral over the space of curves.
We begin by observing that such integrals can be expressed as limits of finite sums.

\begin{theorem}\label{intOverClBySums}
  Let $\mathcal{C}_l$ denote the set of curves of length $l$, equipped with the Borel $\sigma$-algebra $\mathscr{G}$. 
  Suppose that $\mu$ is a finite positive measure on $\mathcal{C}_l$ and let $h_j,j\in\mathbb{N}$, be a sequence of $\mathscr{G}$-measurable functions for which $\int_{\mathcal{C}_l} h_j \, d\mu$ exists. 
  Then there exists a sequence of curves $x_i\in\mathcal{C}_l$, $i\in\mathbb{N}$, such that 
  \begin{equation}\label{NormTimesSumGivesIntegral}
    \lim_{n\to\infty}\frac{\norm{\mu}_{M_b(\mathcal{C}_l)}}{n}\sum_{i=1}^n h_j(x_i)=\int_{\mathcal{C}_l} h_j(x)\,d\mu(x)\quad\text{for all $j\in\mathbb{N}$.}
  \end{equation}
\end{theorem}

The idea in \autoref{intOverClBySums} is that an integral $\int_{\mathcal{C}} h\,d\mu$ over a general space $\mathcal{C}$ can be expressed as a limit of weighted sums: 
\begin{equation}\label{LimitOfWeightedSums}
  \int_{\mathcal{C}} h\,d\mu = \lim_{n\to\infty}\sum_{i=1}^n c_{i,n}h(x_{i,n}),
\end{equation}
where $c_{i,n}$ are suitably chosen scalars and $x_{i,n}\in\mathcal{C}$ are suitably chosen points.

For common choices of space $\mathcal{C}$ we may select the points $x_{i,n}\in\mathcal{C}$ explicitly.
For instance, when $\mathcal{C}$ is a finite interval $[a,b]$, we can choose equally spaced points $x_{i,n}=a+i\frac{b-a}{n}$, with $c_{i,n}=1/n$ for all $i$.
Many other choices are possible: for instance, Simpson's rule for integration (with $n=2k+1$ odd and, for convenience, $i$ running from 0 to $2k$) takes $x_{i,n}=a+i\frac{b-a}{2k}$ and $(c_{0,n},\dotsc,c_{n,n}) = \frac{1}{6k}(1,4,2,4,2\dotsc 2,4,2,4,1)$.

The quantity in \eqref{LimitOfWeightedSums} resembles a Riemann sum approximation to the integral $\int_a^b h(x)\,dx$.
There are however notable differences: Riemann integration requires that the limit in \eqref{LimitOfWeightedSums} should exist when $h(x_{i,n})$ is replaced by the supremum, or infimum, of $h$ over a suitably chosen subinterval to which $x_{i,n}$ belongs, and the limit should exist for any subdivision of $[a,b]$ into small subintervals \cite{RudinPoMA}*{Chapter 6 and Theorem 11.33}.

For a less structured space such as $\mathcal{C}_l$, there may be no natural way to choose points $x_{i,n}$ \textit{a priori}.
We will avoid this difficulty by choosing random points $X_i$.

\begin{proof}[Proof of \autoref{intOverClBySums}]
Normalize the finite measure $\mu$ to produce a probability measure $\nu=\mu/\|\mu\|_{M_b(\mathcal{C}_l)}$ on $(\mathcal{C}_l,\mathscr{G})$.
Construct $\Omega=\mathcal{C}_l^\mathbb{N}$, the set of infinite sequences with values in $\mathcal{C}_l$, equipped with the product $\sigma$-algebra $\mathscr{F}=\mathscr{G}^{\otimes \mathbb{N}}$.
On the measurable space $(\Omega,\mathscr{F})$, assign the product measure $\mathbb{P}=\nu^{\otimes \mathbb{N}}$.
Set $X_i\colon\Omega\to\mathcal{C}_l$ to be the $i^\text{th}$ coordinate function: for a sequence $\omega=(\omega_1,\omega_2,\dotsc)$, set $X_i(\omega)=\omega_i$.
From the definition of product $\sigma$-algebra, the function $X_i\colon \Omega\to\mathcal{C}_l$ is measurable as a mapping from the measurable space $(\Omega,\mathscr{F})$ to the measurable space $(\mathcal{C}_l,\mathscr{G})$.

In probabilistic language, the probability space $(\Omega,\mathscr{F},\mathbb{P})$ corresponds to a random experiment where each point $X_i$ is chosen according to $\P(X_i\in A)=\nu(A)=\mu(A)/\mu(\mathcal{C}_l)$ for any $A\in\mathscr{G}$.
Furthermore, if $A_i\in\mathscr{G}$ for all $i$ then the events $\set{X_i\in A_i}$ are independent across different $i$.
Thus the $X_i$'s are independent and identically distributed (i.i.d.) random variables with values in $\mathcal{C}_l$ and law $\nu$.
(As is standard, this formulation elides the role of the $\sigma$-algebra $\mathscr{G}$ and the underlying probability space $(\Omega,\mathscr{F},\mathbb{P})$.)

For each measurable function $h_j\colon\mathcal{C}_l\to\mathbb{R}$, we can define real-valued random variables $H_{i,j} = h_j(X_i)$.
Then for each fixed $j$, the random variables $H_{i,j}$, $i\in\mathbb{N}$, are themselves i.i.d.\ with common expected value
\begin{align*}
\E(H_{i,j}) = \int_\Omega h_j(X_i(\omega)) \, d\mathbb{P}(\omega) = \int_\Omega h_j(\omega_i) \, d\mathbb{P}(\omega) = \int_{\mathcal{C}_l} h_j(x) \, d\nu(x)
\end{align*}
by the properties of product measure.

In this setting, the Strong Law of Large Numbers, see for instance \cite{Durrett2019}*{Theorem 2.5.10}, asserts that 
\begin{align}\label{SLLNhXi}
\lim_{n\to\infty} \frac{1}{n}\sum_{i=1}^n h_j(X_i) = \int_{\mathcal{C}_l} h_j(x) \, d\nu(x) \quad\text{almost surely}.
\end{align}
More precisely, the function 
\begin{align*}
\omega\mapsto \lim_{n\to\infty} \frac{1}{n}\sum_{i=1}^n h_j(X_i(\omega))
\end{align*}
exists and equals the constant $\int_{\mathcal{C}_l} h_j(x) \, d\nu(x)$ for $\mathbb{P}$-almost-every $\omega$.
In other words, each set 
\begin{equation}
  B_j = \set{\omega\in\Omega\colon\lim_{n\to\infty} \frac{1}{n}\sum_{i=1}^n h_j(X_i(\omega)) = \int_{\mathcal{C}_l} h_j(x) \, d\nu(x)} 
\end{equation}
has $\P(B_j)=1$ and $\P(B_j^c)=0$.
Taking a countable intersection $B=\intersect_{j=1}^\infty B_j$, it follows that $\P(B^c)=0$ and hence $\P(B)=1$.
In particular, $B$ must be non-empty, so there exists some $\tilde{\omega}\in B$.
Defining $x_i=X_i(\tilde{\omega})$, the definition of $B$ implies that 
\begin{equation}
  \lim_{n\to\infty} \frac{1}{n}\sum_{i=1}^n h_j(x_i) = \int_{\mathcal{C}_l} h_j(x) \, d\nu(x) \quad\text{for all $j\in\mathbb{N}$}
\end{equation}
and multiplying both sides by $\norm{\mu}_{M_b(\mathcal{C}_l)}$ yields \eqref{NormTimesSumGivesIntegral}.
\end{proof}

Note that the random curves $X_i$, i.e., the functions $\omega\mapsto X_i(\omega)$, depend neither on $n$ nor on $h$.
However, the proof is non-constructive: the fact that $B$ is non-empty implies the existence of some sequence of curves $x_i$ for which \eqref{LimitOfWeightedSums} holds, but does not give a specific sequence.
In particular, arguments based on \eqref{SLLNhXi} must contend with the fact that \eqref{LimitOfWeightedSums} holds only almost everywhere, and the exceptional set $\Omega\setminus B$ (and hence the chosen points $x_i$) may \textit{a priori} depend on the choice of functions $h_j$.

The quantity inside the limit in \eqref{SLLNhXi} can be interpreted as the integral of $h_j$ with respect to a random measure: if we define a measure $\eta_n$ on $\mathcal{C}_l$ by
\begin{align*}
\eta_n = \frac{1}{n}\sum_{i=1}^n \delta_{X_i}
\end{align*}
(with $\delta_x$ denoting the Dirac mass at $x\in\mathcal{C}_l$) then
\begin{align*}
\int_{\mathcal{C}_l} h_j(x) \, d\eta_n(x) = \frac{1}{n}\sum_{i=1}^n h_j(X_i).
\end{align*}
Since $\mathcal{C}_l$ has additional structure, it is possible to argue that $\eta_n$ converges $\P$-a.s.\ to $\nu$ in the weak topology for measures on $\mathcal{C}_l$.
In this case, \eqref{SLLNhXi} holds simultaneously for all continuous bounded functions $h$, a.s., with a single exceptional set of measure zero for all such functions $h$.
Specifically, this will occur if we can find a countable collection of functions $h_j$ that are convergence-determining for the weak topology for measures on $\mathcal{C}_l$.
This is the case in the proof of \autoref{lln-cor} below, though since $\nu$ is not our primary focus we will carry out this part of the argument for $F$ rather than for $\nu$.

\section{Proofs} \label{proofs}

As we now explain, \autoref{intOverClBySums} and Smirnov's decomposition allow us to represent a divergence free function $F$ in terms of a sequence of curves.
In the remainder of the paper, we denote curves using the letter $R$ instead of $x$ as in \autoref{intOverClBySums}.

By Theorem A in \cite{Smirnov} we have
\begin{equation}
    \label{smirnov-appx}
    \langle F,\Phi\rangle = \int_{\mathcal{C}_l} \langle R,\Phi\rangle \;d\mu(R),
\end{equation}
where the measure $\mu$ satisfies $\|\mu\|_{M_b(\mathcal{C}_l)} = l^{-1} \|F\|_{M_b(\mathbb{R}^d;\mathbb{R}^d)}$ and
\[
\frac{ |F|}{l} = \int_{\mathcal{C}_l} \delta_{b(R)} d\mu(R)
    = \int_{\mathcal{C}_l} \delta_{e(R)} d\mu(R)
\]
where $b(R)$ and $e(R)$ are the beginning and endpoints of the curve $R$ and the total variation measure $|F|$ is the non-negative measure on $\mathbb{R}^d$ defined by 
\begin{align*}
\langle |F|, \varphi \rangle = \sup_{\Phi \in C_c(\mathbb{R}^d; \mathbb{R}^d) , \|\Phi\|_{C_0(\mathbb{R}^d;\mathbb{R}^d)} \leq 1} \langle F ,  \varphi \Phi \rangle
\end{align*}

This implies the auxiliary 
\begin{theorem}
    \label{lln-cor}
    For any $l>0$ there exists a sequence of curves $R_i\in\mathcal{C}_l$
    satisfying the following conditions:
    For any $\Phi\in C_0(\mathbb{R}^d;\mathbb{R}^d)$
    \[
        \langle F, \Phi\rangle
        = \lim_{n\to\infty}
        \frac{\|F\|_{M_b(\mathbb{R}^d;\mathbb{R}^d)}}{n} \sum_{i=1}^n \frac{\langle R_i,\Phi\rangle}{l}.
    \]
    Moreover, with $b(R)$ and $e(R)$ denoting the beginning and end points
    of the curve $R$, we have for any $\varphi\in C_0(\mathbb{R}^d)$
    \begin{equation}
        \label{|F|-appx}
    \langle |F|, \varphi\rangle = \lim_{n\to\infty} \frac{\|F\|_{M_b(\mathbb{R}^d;\mathbb{R}^d)}}{n} \sum_{i=1}^n \langle \delta_{b(R_i)}, \varphi\rangle 
   = \lim_{n\to\infty} \frac{\|F\|_{M_b(\mathbb{R}^d;\mathbb{R}^d)}}{n} \sum_{i=1}^n \langle \delta_{e(R_i)}, \varphi\rangle.
    \end{equation}
\end{theorem}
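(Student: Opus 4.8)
The plan is to realize the curves $R_i$ as the random sample points $X_i$ of Theorem~\ref{strong-law}, applied to Smirnov's measure $\mu$ on $\mathcal{C}_l$, and then to upgrade the almost-sure convergence that Theorem~\ref{strong-law} supplies for each \emph{individual} test function into a single deterministic sequence valid simultaneously for \emph{all} test functions. (If $F=0$ the assertion is trivial: take any $R_i\in\mathcal{C}_l$, since both sides vanish; so assume $F\neq 0$, whence $\mu\neq 0$.)

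For the setup, fix once and for all the probability space $(\Omega,\mathscr{F},\mathbb{P})$ of Section~\ref{probability} built from $\mathcal{C}=\mathcal{C}_l$ and the normalized Smirnov measure $\nu=\mu/\|\mu\|_{M_b(\mathcal{C}_l)}$; the sample points $X_i$ are then the coordinate maps and do not depend on the integrand. For $\Phi\in C_0(\mathbb{R}^d;\mathbb{R}^d)$ set $g_\Phi(R)=\langle R,\Phi\rangle$, and for $\varphi\in C_0(\mathbb{R}^d)$ set $g_\varphi^b(R)=\varphi(b(R))$ and $g_\varphi^e(R)=\varphi(e(R))$; these are measurable on $\mathcal{C}_l$ by Smirnov's construction (on which $R\mapsto\langle R,\Phi\rangle$, $R\mapsto b(R)$ and $R\mapsto e(R)$ are continuous). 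Since each $R\in\mathcal{C}_l$ is parametrized by arclength over an interval of length $l$, we have $|g_\Phi(R)|\leq l\,\|\Phi\|_{C_0}$ and $|g_\varphi^b(R)|,|g_\varphi^e(R)|\leq\|\varphi\|_{C_0}$, so all three are bounded and hence $\mu$-integrable. By \eqref{smirnov-appx} and the identity $l^{-1}F_{TV}=\int_{\mathcal{C}_l}\delta_{b(R)}\,d\mu(R)=\int_{\mathcal{C}_l}\delta_{e(R)}\,d\mu(R)$ displayed just after it (noting $F_{TV}=\|F\|$),
\[
\int_{\mathcal{C}_l} g_\Phi\,d\mu=\langle F,\Phi\rangle,\qquad
\int_{\mathcal{C}_l} g_\varphi^b\,d\mu=\int_{\mathcal{C}_l} g_\varphi^e\,d\mu=\frac{1}{l}\langle\|F\|,\varphi\rangle.
\]
Feeding each of these integrands into Theorem~\ref{strong-law} and using $\|\mu\|_{M_b(\mathcal{C}_l)}=l^{-1}\|F\|_{M_b(\mathbb{R}^d;\mathbb{R}^d)}$ yields, for each \emph{fixed} $\Phi$, resp.\ $\varphi$, an almost-sure identity of precisely the form asserted in the theorem.

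The next step is the separability reduction. Both $C_0(\mathbb{R}^d;\mathbb{R}^d)$ and $C_0(\mathbb{R}^d)$ are separable Banach spaces; fix countable dense subsets $\{\Phi_j\}_{j\in\mathbb{N}}$ and $\{\varphi_k\}_{k\in\mathbb{N}}$. For each $j$ and $k$ the corresponding limit holds off a $\mathbb{P}$-null set; the union $N$ of these countably many null sets is still null, so $\Omega\setminus N\neq\emptyset$. Fix any $\omega\in\Omega\setminus N$ and put $R_i=X_i(\omega)$. Then all three limits hold for every $\Phi_j$ and every $\varphi_k$. Finally, I pass to a general $\Phi$ (resp.\ $\varphi$) by a $3\epsilon$-argument, using that both sides depend Lipschitz-continuously on the test function in the sup norm, uniformly in $n$ and in the sample: for any $R\in\mathcal{C}_l$, $|\langle R,\Psi\rangle|\leq l\,\|\Psi\|_{C_0}$, so
\[
\Bigl|\frac{\|F\|_{M_b(\mathbb{R}^d;\mathbb{R}^d)}}{n}\sum_{i=1}^n\frac{\langle R_i,\Psi\rangle}{l}\Bigr|\leq \|F\|_{M_b(\mathbb{R}^d;\mathbb{R}^d)}\,\|\Psi\|_{C_0},\qquad |\langle F,\Psi\rangle|\leq\|F\|_{M_b(\mathbb{R}^d;\mathbb{R}^d)}\,\|\Psi\|_{C_0},
\]
and likewise $\bigl|\tfrac{\|F\|_{M_b(\mathbb{R}^d;\mathbb{R}^d)}}{n}\sum_{i=1}^n(\varphi-\varphi_k)(b(R_i))\bigr|\leq\|F\|_{M_b(\mathbb{R}^d;\mathbb{R}^d)}\|\varphi-\varphi_k\|_{C_0}$ and $|\langle\|F\|,\varphi-\varphi_k\rangle|\leq\|F\|_{M_b(\mathbb{R}^d;\mathbb{R}^d)}\|\varphi-\varphi_k\|_{C_0}$ (the total mass of $\|F\|$ being $\|F\|_{M_b(\mathbb{R}^d;\mathbb{R}^d)}$). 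Given $\Phi$ and $\epsilon>0$, choose $\Phi_j$ with $\|\Phi-\Phi_j\|_{C_0}<\epsilon$, apply the already-established limit for $\Phi_j$, and let $\epsilon\to 0$; the argument for $\varphi$ at the beginning and at the end points is identical.

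The only genuine obstacle is the interchange of the quantifiers ``for all test functions'' and ``almost surely'': Theorem~\ref{strong-law} gives an $\omega$-exceptional set that may a priori depend on the integrand, and this is exactly what the separability reduction neutralizes, with the uniform sup-norm bounds above supplying the passage from a dense family to all test functions. Everything else is bookkeeping with Smirnov's two identities and the normalization constant $\|\mu\|_{M_b(\mathcal{C}_l)}=l^{-1}\|F\|_{M_b(\mathbb{R}^d;\mathbb{R}^d)}$.
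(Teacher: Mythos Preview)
Your proposal is correct and follows essentially the same route as the paper: normalize Smirnov's measure $\mu$ on $\mathcal{C}_l$, apply the Strong Law (Theorem~\ref{strong-law}) to the three integrands $R\mapsto\langle R,\Phi\rangle$, $R\mapsto\varphi(b(R))$, $R\mapsto\varphi(e(R))$ for a countable dense family of test functions, intersect the resulting full-measure sets to extract a single sequence $\{R_i\}$, and then pass to arbitrary test functions via the uniform sup-norm bounds. The only differences are cosmetic (you handle the trivial case $F=0$ explicitly and remark on measurability of the integrands), so there is nothing to add.
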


\begin{proof}[Proof of \autoref{lln-cor} using \autoref{intOverClBySums}]
    Let $\nu = l \|F\|_{M_b(\mathbb{R}^d;\mathbb{R}^d)}^{-1} \mu$ be the measure obtained by scaling the measure $\mu$ in~\eqref{smirnov-appx}.  
    Let $\{\Phi_j\}_{j=1}^\infty$ and $\{\varphi_j\}_{j=1}^\infty$ be dense sequences of functions in $C_0(\mathbb{R}^d;\mathbb{R}^d)$ and $C_0(\mathbb{R}^d)$, respectively.  
    For $j\in \mathbb{N}$, we define the continuous functions
    $h_j:\mathcal{C}_l\to\mathbb{R}$,
    $h^b_j:\mathcal{C}_l\to\mathbb{R}$, and
    $h^e_j:\mathcal{C}_l\to\mathbb{R}$
    by
    \begin{align*}
        h_j(R) &:= \langle R, \Phi_j\rangle, &
        h^b_j(R) &:= \langle \delta_{b(R)}, \varphi_j\rangle, &
        h^e_j(R) &:= \langle \delta_{e(R)}, \varphi_j\rangle.
    \intertext{Note that the inequalities}
|h_j(R)|&\leq \|\Phi_j\|_{C_0(\mathbb{R}^d;\mathbb{R}^d)} l, &
|h^b_j(R)| &\leq \|\varphi_j\|_{C_0(\mathbb{R}^d)}, &
|h^e_j(R)| &\leq \|\varphi_j\|_{C_0(\mathbb{R}^d)}
\intertext{imply the moment conditions}
 \int_{\mathcal{C}_l} &|h_j|\;d\nu < \infty, &
 \int_{\mathcal{C}_l} &|h^b_j|\;d\nu < \infty, &
 \int_{\mathcal{C}_l} &|h^e_j|\;d\nu < \infty.
 \end{align*}
  Applying \autoref{intOverClBySums} with the interleaved sequence of functions $h_1,h^b_1,h^e_1,h_2,h^b_2,\dotsc$, we obtain a sequence of curves $R_i,i\in\mathbb{N}$, such that 
\begin{align*}
  \langle F, \Phi_j\rangle &=    \frac{\|F\|_{M_b(\mathbb{R}^d;\mathbb{R}^d)}}{l} \lim_{n\to\infty} \frac{1}{n} \sum_{i=1}^n
  \langle R_i, \Phi_j\rangle,
  \\
  \langle |F|, \varphi_j\rangle  &= \lim_{n\to\infty} \frac{\|F\|_{M_b(\mathbb{R}^d;\mathbb{R}^d)}}{n} \sum_{i=1}^n \langle \delta_{b(R_i)}, \varphi_j\rangle = \lim_{n\to\infty} \frac{\|F\|_{M_b(\mathbb{R}^d;\mathbb{R}^d)}}{n}
  \sum_{i=1}^n \langle \delta_{e(R_i)}, \varphi_j\rangle
\end{align*}
for all $j\in\mathbb{N}$.
For arbitrary $\Phi \in C_0(\mathbb{R}^d;\mathbb{R}^d)$, we utilize the equalities
  \begin{align*}
       \langle F, \Phi \rangle &=  \langle F, \Phi_j\rangle + \langle F, \Phi-\Phi_j\rangle, \\
    \langle R,\Phi \rangle &= \langle R,\Phi_j\rangle + \langle R,\Phi -\Phi_j\rangle,
 \end{align*}
to write
\begin{align*}
        \langle F, \Phi \rangle &=  \langle F, \Phi-\Phi_j\rangle +  \frac{\|F\|_{M_b(\mathbb{R}^d;\mathbb{R}^d)}}{l} \lim_{n\to\infty} \frac{1}{n} \sum_{i=1}^n
        \langle R_i, \Phi\rangle +  \frac{\|F\|_{M_b(\mathbb{R}^d;\mathbb{R}^d)}}{l} \lim_{n\to\infty} \frac{1}{n} \sum_{i=1}^n
        \langle R_i, \Phi_j-\Phi\rangle.
        \end{align*}
Then the bounds 
\begin{align*}
\left|\langle F, \Phi-\Phi_j\rangle \right| &\leq \|F\|_{M_b(\mathbb{R}^d;\mathbb{R}^d)} \|\Phi-\Phi_j\|_{C_0(\mathbb{R}^d;\mathbb{R}^d)} \\
\left|\langle R,\Phi -\Phi_j\rangle\right| &\leq l \|\Phi-\Phi_j\|_{C_0(\mathbb{R}^d;\mathbb{R}^d)}
\end{align*}
imply
\begin{align*}
        \langle F, \Phi \rangle &=   \frac{\|F\|_{M_b(\mathbb{R}^d;\mathbb{R}^d)}}{l} \lim_{n\to\infty} \frac{1}{n} \sum_{i=1}^n
        \langle R_i, \Phi\rangle + O\left(\|F\|_{M_b(\mathbb{R}^d;\mathbb{R}^d)} \|\Phi-\Phi_j\|_{C_0(\mathbb{R}^d;\mathbb{R}^d)} \right),
        \end{align*}
and it suffices to consider a subsequence such that $\Phi_{j_k} \to \Phi$.  The argument for the other two limits is similar.
\end{proof}

We now prove \autoref{bbassertion}.
\begin{proof}[Proof of \autoref{bbassertion}]

Let $F \in M_b(\mathbb{R}^d;\mathbb{R}^d)$ be such that $\operatorname*{div}F=0$, and by scaling let us assume $\|F\|_{M_b(\mathbb{R}^d;\mathbb{R}^d)}=1$.  
By \autoref{lln-cor} there exists a sequence of curves $R_{i,l}\in\mathcal{C}_l$ such that
\begin{align*}
\langle F, \Phi \rangle= \lim_{n \to \infty} \frac{1}{n} \sum_{i=1}^n \frac{1}{l} \langle R_{i,l}, \Phi \rangle.
\end{align*}
Let us write $\tilde{R}_{i,l}$ for the measure which consists of a closed loop formed by adjoining to $R_{i,l}$ the straight line segment connecting the end point $e(R_{i,l})$ to the beginning point $b(R_{i,l})$.
Write $\overline{R}_{i,l}$ for the measure which is integration along the straight line segment in reverse, from beginning to end.  
Then the preceding result may be rewritten as
\begin{align*}
\langle F, \Phi \rangle = \lim_{n \to \infty} \frac{1}{n} \sum_{i=1}^n \frac{1}{l} \langle \tilde{R}_{i,l}, \Phi \rangle + \lim_{n \to \infty} \frac{1}{n} \sum_{i=1}^n \frac{1}{l} \langle \overline{R}_{i,l}, \Phi \rangle.
\end{align*}

We next show that
\begin{align}\label{claim}
\limsup_{l \to \infty} \limsup_{n \to \infty} \frac{1}{n} \sum_{i=1}^n \frac{1}{l}  \|\overline{R}_{i,l}\|_{M_b(\mathbb{R}^d;\mathbb{R}^d)}=0.
\end{align}
To this end, recall that from the preceding Theorem that $b(R), e(R)$ denote the beginning and ending of the curve $R$, we can write
\begin{align*}
\frac{1}{n} \sum_{i=1}^n \frac{1}{l}  \|\overline{R}_{i,l}\|_{M_b(\mathbb{R}^d;\mathbb{R}^d)} = \frac{1}{n} \sum_{|b(R_{i,l})-e(R_{i,l})| \leq \epsilon l} \frac{1}{l}  \|\overline{R}_{i,l}\|_{M_b(\mathbb{R}^d;\mathbb{R}^d)} +  \frac{1}{n} \sum_{|b(R_{i,l})-e(R_{i,l})|>\epsilon l} \frac{1}{l}  \|\overline{R}_{i,l}\|_{M_b(\mathbb{R}^d;\mathbb{R}^d)}.
\end{align*}
For the first term, we can estimate by the length of the curve to obtain the bound
\begin{align*}
 \frac{1}{n} \sum_{|b(R_{i,l})-e(R_{i,l})|\leq \epsilon l} \frac{1}{l}  \|\overline{R}_{i,l}\|_{M_b(\mathbb{R}^d;\mathbb{R}^d)}  \leq \epsilon.
\end{align*}
Meanwhile, for the second term we have that $ \|\overline{R}_{i,l}\|_{M_b(\mathbb{R}^d;\mathbb{R}^d)} \leq l$ and so
\begin{align*}
 \frac{1}{n} \sum_{|b(R_{i,l})-e(R_{i,l})|>\epsilon l} \frac{1}{l}  \|\overline{R}_{i,l}\|_{M_b(\mathbb{R}^d;\mathbb{R}^d)} \leq \frac{\# \{R_i : |b(R_{i,l})-e(R_{i,l})|>\epsilon l \}}{n}.
 \end{align*}
As
\begin{align*}
\{R_i : |b(R_{i,l})-e(R_{i,l})|>\epsilon l \} \subset \{R_i :b(R_{i,l}) \in B(0,\epsilon l/2)^c  \} \cup \{R_i :e(R_{i,l}) \in B(0,\epsilon l/2)^c  \},
\end{align*}
we can bound the second term by
\begin{align*}
\frac{\# \{R_i :b(R_{i,l}) \in B(0,\epsilon l/2)^c  \}}{n} +  \frac{\# \{R_i :e(R_{i,l}) \in B(0,\epsilon l/2)^c  \}}{n}.
\end{align*}
We claim that the double limit in $l$ and $n$ of this quantity converges to zero.  To this end, we let $\varphi \in C_c(\mathbb{R}^d)$ be a cutoff function, i.e. $0 \leq \varphi \leq 1$, $\operatorname*{supp} \varphi \subset B(0,\epsilon l/2)$, and $\varphi \equiv 1$ on $B(0,\epsilon l/2-1)$.  For such a function we see that, for $l$ sufficiently large,
\begin{align*}
|F|(B(0,\epsilon l/2-1)) \leq \langle |F|, \varphi \rangle = \lim_{n \to \infty} \frac{1}{n} \sum_{i=1}^n \langle \delta_{b(R_{i,l})} \varphi \rangle  \leq \liminf_{n \to \infty} \frac{\# \{R_i :b(R_{i,l}) \in B(0,\epsilon l/2)  \}}{n}.
\end{align*}
In particular,
\begin{align*}
1 = \lim_{l \to \infty} |F|(B(0,\epsilon l/2-1))  \leq \liminf_{l \to \infty}\; \liminf_{n \to \infty} \frac{\# \{R_i :b(R_{i,l}) \in B(0,\epsilon l/2)  \}}{n},
\end{align*}
and therefore 
\begin{align*}
 \limsup_{l \to \infty}\limsup_{n \to \infty}\frac{\# \{R_i :b(R_{i,l}) \in B(0,\epsilon l/2)^c  \}}{n} \leq 1- \liminf_{l \to \infty}\; \liminf_{n \to \infty}\frac{\# \{R_i :b(R_{i,l}) \in B(0,\epsilon l/2)  \}}{n} = 0
\end{align*}
Thus we have shown that
\begin{align*}
\limsup_{l \to \infty}\; \limsup_{n \to \infty} \frac{1}{n} \sum_{i=1}^n \frac{1}{l}  \|\overline{R}_{i,l}\|_{M_b(\mathbb{R}^d;\mathbb{R}^d)} \leq \epsilon
\end{align*}
and it suffices to send $\epsilon$ to zero and the claim is proved.

As a result of \eqref{claim} we have, firstly, the weak convergence
\begin{align*}
\langle F, \Phi \rangle = \lim_{l\to \infty} \lim_{n \to \infty} \frac{1}{n} \sum_{i=1}^n \frac{1}{l} \langle \tilde{R}_{i,l}, \Phi \rangle,
\end{align*}
and secondly, the estimate
\begin{align*}
 \left| \frac{1}{n} \sum_{i=1}^n \frac{1}{l}  \langle \tilde{R}_{i,l}, \Phi \rangle \right| &\leq \frac{1}{n} \sum_{i=1}^n \frac{1}{l}  \|R_{i,l}\|_{M_b(\mathbb{R}^d;\mathbb{R}^d)} + \frac{1}{n} \sum_{i=1}^n \frac{1}{l} \|\overline{R}_{i,l}\|_{M_b(\mathbb{R}^d;\mathbb{R}^d)} \\
 &\leq 1+\frac{1}{n} \sum_{i=1}^n \frac{1}{l} \|\overline{R}_{i,l}\|_{M_b(\mathbb{R}^d;\mathbb{R}^d)}.
\end{align*}
This shows convergence in the strict topology of measures. 

As $\tilde{R}_{i,l} $ are one dimensional rectifiable currents without boundary, we have that $\tilde{R}_{i,l} \in \mathbb{I}_1(\mathbb{R}^d)$.  Therefore for each $\tilde{R}_{i,l}$ we can apply \cite{Federer}*{4.2.20} to obtain a family of one dimensional polygonal chains $P_{i,l,\eta}$ and a family of Lipschitz maps $f^\eta$ for which
\begin{align*}
\lim_{\eta \to 0} \|P_{i,l,\eta}-f_\#^\eta\tilde{R}_{i,l}\|_{M_b(\mathbb{R}^d;\mathbb{R}^d)} = 0.
\end{align*}
The fact that $\tilde{R}_{i,l}$ are without boundary implies that the $P_{i,l,\eta}$ obtained in the theorem are without boundary.  Moreover, the above convergence, the weak-star convergence $f_\#^\eta\tilde{R}_{i,l} \overset{*}{\rightharpoonup}\tilde{R}_{i,l}$, and the bound
\begin{align*}
Lip(f^\eta) \leq 1+\eta
\end{align*}
shows
\begin{align*}
\lim_{\eta \to 0} \|P_{i,l,\eta}\| = \lim_{\eta \to 0} \|f_\#^\eta\tilde{R}_{i,l}\|_{M_b(\mathbb{R}^d;\mathbb{R}^d)} = \|\tilde{R}_{i,l}\|_{M_b(\mathbb{R}^d;\mathbb{R}^d)},
\end{align*}
i.e. the measures $P_{i,l,\eta}$ converge to the measure $\tilde{R}_{i,l}$ in the strict topology.  It only remains to smooth the corners, replacing $P_{i,l,\eta}$ with $\tilde{R}_{i,l,\eta}$ which are closed and decrease the length for each $\eta$, as depicted in the following figure.
\begin{figure}[h]
\centering
\includegraphics[width = 0.95 \textwidth]{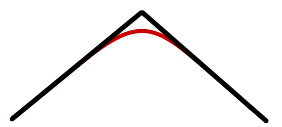}
\caption{A depiction of the smoothing of corners.}
\label{figureSmoothing}
\end{figure}

As the decrease in length can be made to go to zero as $\eta \to 0$, these $\tilde{R}_{i,l,\eta}$ also converge to $\tilde{R}_{i,l}$ in the strict topology, i.e. weak-star convergence
\begin{align*}
\langle F, \Phi \rangle = \lim_{l\to \infty} \lim_{n \to \infty} \lim_{\eta \to 0} \frac{1}{n} \sum_{i=1}^n \frac{1}{l} \langle \tilde{R}_{i,l,\eta}, \Phi \rangle
\end{align*}
and an upper bound for the total variations
\begin{align*}
\limsup_{l\to \infty} \limsup_{n \to \infty} \limsup_{\eta \to 0} \frac{1}{n} \sum_{i=1}^n \frac{1}{l} \|\tilde{R}_{i,l,\eta}\|_{M_b(\mathbb{R}^d;\mathbb{R}^d)}  \leq 1,
\end{align*}
which follows from the upper bound for $P_{i,l,\eta}$ and the decrease in length in their smoothing to $\tilde{R}_{i,l,\eta}$.  From this a diagonal argument yields
\begin{align*}
\langle F, \Phi \rangle = \lim_{l\to \infty}  \frac{1}{n_l} \sum_{i=1}^{n_l} \frac{1}{l} \langle \tilde{R}_{i,l,\eta_l}, \Phi \rangle
\end{align*}
and
\begin{align*}
\limsup_{l\to \infty} \frac{1}{n_l} \sum_{i=1}^{n_l} \frac{1}{l} \|\tilde{R}_{i,l,\eta_l}\|_{M_b(\mathbb{R}^d;\mathbb{R}^d)}  \leq 1.
\end{align*}
The former limit is precisely the weak-star convergence of the convex sum of loops claimed, while it implies
\begin{align*}
\|F\|_{M_b(\mathbb{R}^d;\mathbb{R}^d)} \leq \liminf_{l\to \infty} \frac{1}{n_l} \sum_{i=1}^{n_l} \frac{1}{l} \|\tilde{R}_{i,l,\eta_l}\|_{M_b(\mathbb{R}^d;\mathbb{R}^d)},
\end{align*}
as the total variation is lower semicontinuous with respect to the weak-star convergence.  Thus, when combined with the latter inequality, using $\|F\|_{M_b(\mathbb{R}^d;\mathbb{R}^d)}=1$ we obtain the convergence of total variations claimed.
 
It only remains to adapt the notation to match the statement of the theorem.  Observe that we have found an approximation in terms of smooth curves $\tilde{R}_{i,l,\eta_l}$, which we identify with the Radon measures they induce and denote by $\mu_{\Gamma_{i,l}}:=\tilde{R}_{i,l,\eta_l}$.  Then our result in this notation reads
\begin{align*}
F= \lim_{l \to \infty} \frac{\|F\|_{M_b(\mathbb{R}^d;\mathbb{R}^d)}}{n_l \cdot l} \sum_{i=1}^{n_l}  \mu_{\Gamma_{i,l}}
\end{align*}
weakly-star in the sense of measures and
\begin{equation*}
\lim_{l \to \infty} \frac{1}{n_l \cdot l} \sum_{i=1}^{n_l}  \|\mu_{\Gamma_{i,l}}\|_{M_b(\mathbb{R}^d;\mathbb{R}^d)} = 1.
\qedhere
\end{equation*}
\end{proof}

We conclude by proving \autoref{density}.

\begin{proof}[Proof of \autoref{density}]
Let $\Gamma_{i,l}, l\in\mathbb{N}, i=1,\dotsc,n_l$, be the smooth, closed loops given by \autoref{bbassertion} for which
\begin{align*}
F= \lim_{l \to \infty} \frac{\|F\|_{M_b(\mathbb{R}^d;\mathbb{R}^d)}}{n_l \cdot l} \sum_{i=1}^{n_l}  \mu_{ \Gamma_{i,l}}
\end{align*}
and
\begin{align*}
\lim_{l \to \infty} \frac{1}{n_l \cdot l} \sum_{i=1}^{n_l}  \|\mu_{\Gamma_{i,l}}\|_{M_b(\mathbb{R}^d;\mathbb{R}^d)} = 1.
\end{align*}

Denote by $G_l$ the $l^\text{th}$ approximation of  $F$ by loops, i.e.
\begin{align*}
G_l:= \frac{\|F\|_{M_b(\mathbb{R}^d;\mathbb{R}^d)}}{n_l \cdot l} \sum_{i=1}^{n_l}  \mu_{ \Gamma_{i,l}}.
\end{align*}
Then if $\{\rho_k\}_{k \in \mathbb{N}}$ is a smooth, compactly supported approximation of the identity, we claim $F_l:=G_l \ast \rho_l$ has the desired properties.  

In particular, $F_l$ is smooth by properties of $\rho_l$, compactly supported by the compact support of the loops and $\rho_l$, and for any $\Phi \in C_0(\mathbb{R}^d;\mathbb{R}^d)$ we have
\begin{align}\label{f_l}
\langle F_l, \Phi \rangle= \langle G_l \ast \rho_l, \Phi \rangle = \langle G_l  , \Phi \ast \rho_l \rangle.
\end{align}
This shows firstly that $F_l$ is divergence free, since if $\Phi = \nabla \varphi$ for some $\varphi \in C^1_c(\mathbb{R}^d)$, the fact that derivatives commute with convolution implies that
\begin{align*}
\langle F_l, \nabla \varphi \rangle =\langle G_l, \nabla (\varphi \ast \rho_l) \rangle = -\langle \operatorname*{div} G_l,  \varphi \ast \rho_l \rangle = 0,
\end{align*}
as $\varphi \ast \rho_l \in C^\infty_c(\mathbb{R}^d)$.  Toward the convergence, letting $l \to \infty$ in \eqref{f_l}, utilizing that $\Phi \ast \rho_l \to \Phi$ in the strong topology of $C_0(\mathbb{R}^d;\mathbb{R}^d)$ and $G_l \to F$ weakly-star, we obtain
\begin{align*}
\lim_{l \to \infty} \langle F_l \ast \rho_l, \Phi \rangle =  \langle F , \Phi \rangle,
\end{align*}
which is to say that the sequence $\{F_l\} \subset C^\infty_c(\mathbb{R}^d;\mathbb{R}^d)$ converges to $F$ in the weak-star topology. As this implies
\begin{align*}
\|F\|_{M_b(\mathbb{R}^d;\mathbb{R}^d)} \leq \liminf_{l \to \infty} \|F_l\|_{M_b(\mathbb{R}^d;\mathbb{R}^d)},
\end{align*}
it only remains to show that
\begin{align*}
 \limsup_{l \to \infty} \|F_l\|_{M_b(\mathbb{R}^d;\mathbb{R}^d)} \leq \|F\|_{M_b(\mathbb{R}^d;\mathbb{R}^d)}
\end{align*}
to obtain the strict convergence.  However, Fubini's theorem and the fact that $\int \rho_l=1$ implies
\begin{align*}
\|F_l\|_{M_b(\mathbb{R}^d;\mathbb{R}^d)}  \leq \int_{\mathbb{R}^d} (G_l)_{TV} \ast \rho_l \;dx \leq \|G_l\|_{M_b(\mathbb{R}^d;\mathbb{R}^d)},
\end{align*}
and since
\begin{align*}
\limsup_{l \to \infty}  \|G_l\|_{M_b(\mathbb{R}^d;\mathbb{R}^d)} \leq \lim_{l \to \infty} \frac{\|F\|_{M_b(\mathbb{R}^d;\mathbb{R}^d)}}{n_l \cdot l} \sum_{i=1}^{n_l}  \|\mu_{\Gamma_{i,l}}\|_{M_b(\mathbb{R}^d;\mathbb{R}^d)} = \|F\|_{M_b(\mathbb{R}^d;\mathbb{R}^d)},
\end{align*}
the result is demonstrated.
\end{proof}

\section*{Acknowledgements}
J.G. is supported in part by grants from the Marsden Fund administered by the Royal Society of New Zealand.
F.H. is supported by the Fannie and John Hertz Foundation.
D.S. is supported by the National Science and Technology Council of Taiwan under research grant number 110-2115-M-003-020-MY3 and the Taiwan Ministry of Education under the Yushan Fellow Program.


\bibliography{bib}

\end{document}